\documentclass[a4paper,11pt]{amsart}

\makeatletter
\newcommand*{\rom}[1]{\expandafter\@slowromancap\romannumeral #1@}
\makeatother


\linespread{1.1}

\oddsidemargin=-0cm

\evensidemargin=-0cm

\topmargin=-0cm

\textwidth=14cm

\textheight=23cm

\parskip 5pt

\usepackage{amssymb,amscd,amsthm,mathrsfs}
\usepackage[backref=page]{hyperref} 
\usepackage[ansinew]{inputenc}
\usepackage[centertags]{amsmath}
\usepackage{graphicx,psfrag}
\usepackage{xcolor}

\renewcommand*{\backref}[1]{}
\renewcommand*{\backrefalt}[4]{\quad \tiny
  \ifcase #1 (\textbf{NOT CITED.})%
  \or    (Cited on page~#2.)%
  \else   (Cited on pages~#2.)%
  \fi}


\newtheorem*{thm*}{Theorem}

\newtheorem{thm}{Theorem}[section]

\newtheorem{lemma}[thm]{Lemma}

\newtheorem{prop}[thm]{Proposition}
\newtheorem{cor}[thm]{Corollary}

\newcommand{\bi}{\begin{itemize}}
\newcommand{\ei}{\end{itemize}}

\theoremstyle{definition}

\theoremstyle{remark}

\newtheorem*{theorem*}{Theorem}


\newcommand{\T}{\mathbb{T}}
\newcommand{\R}{\mathbb{R}}
\newcommand{\Z}{\mathbb{Z}}
\newcommand{\N}{\mathbb{N}}
\newcommand{\Q}{\mathbb{Q}}

\newcommand{\C}{\mathcal{C}}

\newcommand{\A}{\mathbb{A}}

\author[A. Passeggi]{Alejandro Passeggi}
\address{UdelaR, Facultad de Ciencias.}
\curraddr{Igua 4225 esq. Mataojo. Montevideo, Uruguay.}
\email{alepasseggi@gmail.com}

\author[M. Sambarino]{Mart\'{\i}n Sambarino}
\address{UdelaR, Facultad de Ciencias.}
\curraddr{Igua 4225 esq. Mataojo. Montevideo, Uruguay.}
\email{samba@cmat.edu.uy}

\title[On the Franks-Misiurewicz conjecture]{Deviations in the Franks-Misiurewicz conjecture}


\begin{document}

\maketitle

\begin{abstract}

We show that if there exists a counter example for the rational case of the Franks-Misiurewicz conjecture, then
it must exhibit unbounded deviations in the complementary direction of its rotation set.

\end{abstract}

\section{Introduction}

After the seminal result due to M. Miziurewicz and K. Ziemian \cite{m-z} proving that
the rotation set of a lift $F:\R^2\to\R^2$ of a homeomorphism homotopic to the identity
$f:\T^2:=\R^2/_{\Z^2}\to\T^2$ given by
\small
$$\rho(F)=\left\{\lim_i\frac{F^{n_i}(x_i)-x_i}{n_i}:\ x_i\in\R^2,\ n_i\nearrow +\infty\right\}$$
\normalsize
is a compact convex set, a theory has been developed. Many authors have contributed with different
articles which mostly can be classified under two different focus: (i)
assuming shapes (point, segments, non-empty interior) for the rotation set, derive dynamical properties
(see for instance \cite{franks-reali, franks-reali2, m-z2, llibre-mackay}), (ii) try to find which convex sets are rotation sets
(see for instance \cite{Kwapisznonpol,BoyCaHall}).

\smallskip

Concerning point (ii) there is a long-standing conjecture due to Franks and Misiurewicz \cite{franksmisiu}
which claims the following: if a non trivial interval $I$ is attained as a rotation set then:
\begin{itemize}
\item if $I$ has irrational slope, one end-point is rational,
\item if $I$ has rational slope, it contains a rational point.
\end{itemize}
For the irrational case, A. Avila presented a smooth counter example in 2014 (still not published) which is
minimal. For the second case there have been important progress in the last years.
In \cite{nosotros} it is shown that there can not be a minimal counter example. In fact it is proven that a counter example for this case can not be an
extension of an irrational rotation, and then using the results of Kocsard \cite{Kocsard} and
J\"ager-Tal \cite{jagertal}, one concludes that a minimal example should be an extension of an irrational
rotation, so it can not exist.

\smallskip

In this article, improving \cite{jagertal}, we show that a possible counter example must exhibit
\emph{unbounded deviations} in the complementary direction of the supporting line of the interval $\rho(F)$.
This turns to be quite suggesting as it is shown in several cases that having two different rotation vectors, is
an obstruction for deviations \cite{davalos,forcing}.

\subsection{Precise result}

We call by $\textrm{Homeo}_0(\T^2)$ the family of homotopic to the identity toral homeomorphisms. The rotation
set is defined above. Let $\rho(F)$ be a non-trivial segment contained in a supporting line $\{p+\lambda v\}_{\lambda\in\R}$,
the \emph{perpendicular deviation} of $f$ is given by the (possibly infinite) value
$$\textrm{dev}_{\perp}(f)=\sup_{x\in\R^2} \left\{\textrm{d}(\textrm{pr}_{\perp}(n\cdot\rho(F)),\textrm{pr}_{\perp}(F^n(x)-x))\right\}$$

where $\textrm{pr}_{\perp}:\R^2\to v^{\perp}$ is the projection on a unitary element of $v^{\perp}$, and $\textrm{d}(\cdot,\cdot)$ is euclidean distance in $v^{\perp}$.
We prove the following result.

\begin{thm*}

Assume $f$ is a counter example for the rational case of the Franks-Misiurewicz conjecture. Then it has infinite perpendicular deviation.

\end{thm*}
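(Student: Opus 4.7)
The plan is a proof by contradiction: assume $\textrm{dev}_{\perp}(f) < \infty$, and deduce that $f$ is a continuous extension of an irrational circle rotation, which by \cite{nosotros} is incompatible with $f$ being a counter example to the rational case of the conjecture.

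First, I would normalize. Because the supporting line of $\rho(F)$ has rational slope, an $SL(2,\Z)$ change of coordinates conjugates $F$ to an element of $\homeo$ whose rotation set lies on a horizontal line $\{y=\alpha\}$; since $\rho(F)$ is non-trivial and avoids $\Q^2$, necessarily $\alpha \in \R\setminus\Q$. The perpendicular projection becomes $\pi_2$, and the hypothesis reads
\[
C := \sup_{x \in \R^2,\, n \ge 1}\bigl|\pi_2(F^n(x) - x) - n\alpha\bigr| < \infty.
\]

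Next, I would try to realize $f$ as a continuous extension of $R_\alpha:\T^1\to\T^1$. Concretely, one seeks a continuous $\Z^2$-periodic $\psi:\R^2\to\R$ such that $\tih(x) := \pi_2(x) + \psi(x)$ satisfies $\tih\circ F - \tih \equiv \alpha$; equivalently, $\psi$ must solve the cohomological equation
\[
\psi\circ F - \psi = \alpha - \varphi, \qquad \varphi(x):=\pi_2(F(x)-x).
\]
The crucial observation is that the $n$-th Birkhoff sum of the right-hand side equals exactly $n\alpha - \pi_2(F^n(x)-x)$, hence is uniformly bounded by $C$. Once $\psi$ is produced, $\tih$ descends to a continuous semi-conjugacy $h:\T^2\to\T^1$ with $h\circ f = R_\alpha\circ h$, exhibiting $f$ as an extension of the irrational rotation $R_\alpha$, and \cite{nosotros} then delivers the contradiction.

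The main obstacle is the last step: turning uniformly bounded Birkhoff sums into a \emph{continuous} coboundary \emph{without} assuming minimality. The classical Gottschalk-Hedlund theorem gives this conclusion in the minimal setting, so one natural strategy is to apply it on a minimal subset and then extend $\psi$ to all of $\T^2$ by a Herman-type construction, e.g.\ $\psi(x):=\limsup_n \bigl(n\alpha - \pi_2(F^n(x)-x)\bigr)$, controlling upper semi-continuity via the uniform bound $C$ and promoting this to continuity through equicontinuity arguments in the spirit of \cite{jagertal}. The improvement over \cite{jagertal} lies exactly in requiring control only in the perpendicular direction; exploiting this, together with the non-degeneracy of the rotation segment (which forces coexistence of orbits with distinct horizontal rotation numbers but common vertical speed $\alpha$), is expected to close the argument, and I anticipate that this is where most of the technical work of the paper lives.
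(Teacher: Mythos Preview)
Your strategic framing---assume bounded perpendicular deviation, deduce that (a finite cover of) $f$ is an extension of an irrational rotation, and contradict \cite{nosotros}---is exactly the paper's, as is the normalization to a rotation set lying on a coordinate line with irrational coordinate $\alpha$. Where you diverge is in the mechanism for producing the semi-conjugacy. You propose a direct cohomological route: solve $\psi\circ F-\psi=\alpha-\varphi$ via a Herman-type $\limsup$, invoking Gottschalk--Hedlund on a minimal subset and then extending. This is precisely where the gap sits: without global minimality the $\limsup$ is only upper semi-continuous, and the uniform bound $C$ alone does not promote it to continuity---indeed any such argument would apply verbatim when the rotation set is a single irrational point, so it cannot be the place where the non-triviality of the segment enters. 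You correctly flag this as ``the main obstacle'' but do not resolve it; the non-degeneracy of the segment appears in your sketch only as a hope at the very end, with no indication of how it would feed into an equicontinuity or regularity statement for $\psi$.

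The paper bypasses the cohomological equation entirely. From the bounded-deviation hypothesis it imports the ordered family of circloids $\{\mathcal{C}_r\}_{r\in\R}$ of \cite{jager-linear,jagertal}, satisfying $\hat{f}(\mathcal{C}_r)=\mathcal{C}_{r+\alpha}$; the semi-conjugacy is automatic once some $\mathcal{C}_r$ has a free orbit. Assuming none does, two circloids $\mathcal{C}_r\preceq\mathcal{C}_s$ intersect and one forms a ``bunch'' $\mathcal{B}$ between them, strongly containing a further iterated bunch $\mathcal{B}'$. The new technical core is a purely topological result (Proposition~\ref{p.inter}): any sequence of sub-continua of the lift of an annular continuum strongly contained in $\mathcal{B}$, with diameters tending to infinity, must contain arbitrarily many integer vertical translates of each component of $\tilde{\mathcal{C}}_r\cap\tilde{\mathcal{C}}_s$. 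The non-trivial segment now enters decisively: points $b^-,b^+$ realizing the two extremal rotation vectors lie in $\tilde{\mathcal{B}}'$ and can be joined by a continuum $Z$ (Proposition~\ref{p.tipocompgen}); its iterates $F^n(Z)$ stretch, so the lemma forces them to contain unboundedly many integer translates of a fixed compact set, which is impossible for a lift of a torus homeomorphism (Corollary~\ref{c.noestira}). In short, segment non-degeneracy is exploited as a source of stretching continua that obstruct intersecting circloids, not as input to a coboundary regularity argument.
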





\subsection{Strategy}

As explained in \cite{nosotros}, in order to obtain the result above, we can just work with vertical rotation sets. So we must show

\begin{thm}\label{T.semiconj}

Assume that for a lift $F$ of $f\in\textrm{Homeo}_0(\T^2)$ we have $\rho(F)=\{\alpha\}\times[\rho^-,\rho^+]$
where $\rho^-<\rho^+$, $\alpha\in\Q^c$. Then $f$ has unbounded horizontal deviation.

\end{thm}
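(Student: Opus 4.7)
The plan is to argue by contradiction. Suppose that $f$ has bounded horizontal deviation, that is, there exists $C>0$ such that
\[
|\pi_1(F^n(\tilde x)-\tilde x)-n\alpha|\le C\qquad\text{for every }\tilde x\in\R^2,\ n\in\Z.
\]
Consider the continuous function $\phi:\T^2\to\R$ defined by $\phi(x)=\pi_1(F(\tilde x)-\tilde x)-\alpha$ (well defined by $\Z^2$-periodicity). The assumption is precisely that the Birkhoff sums $S_n\phi=\sum_{k=0}^{n-1}\phi\circ f^k$ are uniformly bounded in sup-norm.

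The core of the argument is to solve the cohomological equation $\phi=u\circ f-u$ for some continuous $u:\T^2\to\R$. If such a $u$ exists, then $h(x):=\pi_1(\tilde x)-u(x)\pmod 1$ is a well-defined continuous map $h:\T^2\to\T^1$ satisfying $h\circ f=R_\alpha\circ h$, where $R_\alpha$ denotes the rotation by $\alpha$. Since $h$ is homotopic to the first-coordinate projection, it is surjective, so $f$ becomes a topological extension of the irrational rotation $R_\alpha$. This contradicts the main result of \cite{nosotros}, which forbids a counter example to the rational case from being an extension of an irrational rotation.

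It therefore suffices to construct $u$, and this is the technical heart of the paper, as well as the place where an improvement over \cite{jagertal} is required. On any minimal set $M\subset\T^2$ of $f$, Gottschalk-Hedlund's theorem (using the uniform bound on $S_n\phi$ together with the minimality of $f|_M$) yields a continuous $u_M:M\to\R$ with $\phi|_M=u_M\circ f-u_M$. The main obstacle, which I expect to be the hard step, is to extend $u_M$ continuously to the whole of $\T^2$ without additional dynamical assumptions. The idea is to exploit the nontriviality of the vertical rotation interval $[\rho^-,\rho^+]$ together with the bounded horizontal deviation in order to shadow every $y\in\T^2$ by points $x_k\in M$ whose orbits track that of $y$ for long time intervals; the bounded deviation should then force $u_M(x_k)$ to converge, producing the value of $u(y)$. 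Verifying that this recipe is independent of the choices and globally continuous — in particular, ruling out jumps across connected components of $\T^2\setminus M$ — is the delicate point where the structure of the rotation interval must be used in a nontrivial way.
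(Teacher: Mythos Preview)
Your overall contradiction strategy---assume bounded horizontal deviation, show $f$ (or a finite cover) is a topological extension of the irrational rotation $R_\alpha$, and then invoke \cite{nosotros}---is exactly the one the paper uses. Where you diverge is in the mechanism for producing the semiconjugacy.

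The paper does \emph{not} try to solve the cohomological equation directly. Instead it uses the circloid machinery of \cite{jager-linear,jagertal}: from bounded horizontal deviation one gets an ordered family $\{\mathcal{C}_r\}_{r\in\R}$ of essential circloids in $\A$ with $\hat f(\mathcal{C}_r)=\mathcal{C}_{r+\alpha}$, and a theorem from those references says that if some $\textrm{p}(\mathcal{C}_{r_0})$ has pairwise disjoint iterates then $f$ is an extension of $R_\alpha$. The entire new content of the paper is a topological argument (the ``bunch'' structure, the vertical homotopical intersection number, Proposition~\ref{p.inter}, Corollary~\ref{c.noestira}, Proposition~\ref{p.tipocompgen}) showing that non-disjoint circloids would force a continuum in $\tilde A$ joining points with distinct rotation vectors while returning syndetically to a fixed bunch---impossible because its vertical diameter blows up, forcing unboundedly many integer copies of a fixed component of $\tilde{\mathcal C}_1\cap\tilde{\mathcal C}_2$ inside $F^n(Z)$.

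Your route has a real gap precisely at the point you flag. Gottschalk--Hedlund gives you $u_M$ on a minimal set $M$, but there is no reason the shadowing heuristic you describe should work: nothing in the hypotheses forces an arbitrary orbit to be asymptotically tracked by orbits in $M$, and a nontrivial vertical rotation interval does not by itself produce such shadowing (indeed, different points can have genuinely different vertical drift). Showing that the putative transfer function extends continuously across $\T^2\setminus M$ is not a technicality---it is essentially equivalent to showing the circloids are pairwise disjoint, which is exactly the step the paper isolates and proves with the new topological tools. Absent a concrete argument for the extension, your proposal stops where the real difficulty begins.
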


In this last sentence \emph{unbounded horizontal deviation} stands for the value $\textrm{dev}_{\perp}(f)$ being infinite when $\rho(F)$
is vertical.

For proving Theorem \ref{T.semiconj} we suppose a counter example with bounded horizontal deviations is possible, and then by improving \cite{jaertal} we get
that this counter example would be an extension of an irrational rotation. This is absurd since \cite{nosotros}.

\section{Topological results}

We consider the torus given by $\T^2=\R^2/_{\Z^2}$ and $\pi_{\T^2}:\R^2\to\T^2$ the covering map. The annulus
is given by $\A=\R^2/_{\sim}$ where $(u,v)\sim (r,s)$ iff $u=r$ and $v-s\in\Z$. We have the natural
covering maps $\pi:\R^2\to \A$ and $\textrm{p}:\A\to\T^2$. In $\R^2$ we name the projection over the first coordinate
by $\textrm{pr}_1$, and over the second coordinate by $\textrm{pr}_2$.

\smallskip

An \emph{annular continuum} in $\A$ is a continuum so that its complements is
given by exactly two unbounded connected components. A \emph{circloid} in $\A$ is an annular continuum which
is minimal with respect to the inclusion. In this article we call \emph{annular continuum} in $\T^2$ to
$\textrm{p}(A)$ where $A\subset \A$ is an annular continuum and $p|_{A}$ is a homeomorphism. A \emph{circloid}
in $\T^2$ is an annular continuum which is minimal with respect to the inclusion.

\smallskip

Back in $\A$ we can define a partial order in the annular continua. Given an annular continua $A\subset \A$ we
have two unbounded components in its complement. We call $\mathcal{U}^+(A)$ to the one whose lift has a projection under $\textrm{pr}_1$ without upper bound, and by $\mathcal{U}^-(A)$ to the complementary one. For two annular continua $A,B$
in $\A$ we say $A$ \emph{precedes} $B$ iff $B\subset\textrm{cl}[\mathcal{U}^+(A)]$, or equivalently 
$A\subset\textrm{cl}[\mathcal{U}^-(B)]$. We denote this by $A\preceq B$.

\medskip

Consider the following situation which we call by \textbf{(S)} along this article: $\C_1,\C_2$ are circloids in $\A$, and $A\subset \A$ is an annular continuum so that:

\begin{itemize}

\item $\C_1\cap\C_2\neq\emptyset$;

\item $\C_1\preceq A\preceq \C_2$;

\item $\C_1\not\subset A$, $C_2\not\subset A$;

\end{itemize}

The second and third item implies that $\C_1\neq\C_2$.
Moreover, for this setting we have that any connected component of $\C_1\cap \C_2$ must be inessential
and contained in $A$, and the same holds for $\C_1\cap A$ and $A\cap \C_2$. Furthermore, if we consider $\tilde{\C}_1,\tilde{\C}_2$ and $\tilde{A}$ be
lifts of $\C_1,\C_2$ and $A$ respectively, and the family
$$\textrm{CC}=\textrm{c.c.}(\tilde{\C}_1\cap\tilde{\C}_2)\cup \textrm{c.c.}(\tilde{\C}_1\cap \tilde{A})\cup \textrm{c.c.}(\tilde{A}\cap\tilde{C}_2)$$

then there exists $K_0>0$ so that
$$\sup_{X\in\textrm{CC}}\left\{\textrm{diam}(\textrm{pr}_2(X))\right\}< K_0.$$

We introduce now a definition. Given a sub-continuum $Z\subset\tilde{A}$ and $X\in\textrm{c.c.}(\tilde{C}_1\cap\tilde{C}_2)$ we define the
\emph{vertical homotopical intersection number} of $Z$ and $X$ by
$$\nu(X,Z)=\#\{v\in\{0\}\times\Z:\ X+v\subset Z\}.$$

Our goal is to prove the following proposition.

\begin{prop}\label{p.inter}
Let $X\in \textrm{c.c.}(\tilde{\C}_1\cap\tilde{\C}_2)$ and $(Z_n)_{n\in\N}$ be a sequence of sub-continua contained in $\tilde{A}$ with
$\textrm{diam}(Z_n)\to+\infty$. Then, $\nu(X,Z_n)\to+\infty$.

\end{prop}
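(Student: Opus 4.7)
The strategy is to turn the hypothesis $\mathrm{diam}(Z_n)\to+\infty$ into a statement about large vertical extent in a horizontally bounded set, and then to use the translates $X+(0,k)$ as unavoidable ``bridges'' that divide $\tilde{A}$ into slabs of uniformly bounded diameter.

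First, since $A$ is a compact annular continuum in the cylinder $\A$, the projection $\mathrm{pr}_1(\tilde{A})=\mathrm{pr}_1(A)$ is a bounded subset of $\R$. Thus $\mathrm{diam}(Z_n)\to+\infty$ forces $\mathrm{pr}_2(Z_n)=[y_n^-,y_n^+]$ with $y_n^+-y_n^-\to+\infty$, and the task reduces to showing that a sub-continuum of $\tilde{A}$ whose vertical projection is a very long interval must contain arbitrarily many translates of $X$.

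Next I would identify the topological role of the bridges. Since $\tilde{\C}_1,\tilde{\C}_2$ are connected, $\Z$-periodic lifts of circloids, each separates $\R^2$ into two unbounded components, and $\tilde{A}$ lies in the closed strip $R=\mathrm{cl}[\cU^+(\tilde{\C}_1)]\cap\mathrm{cl}[\cU^-(\tilde{\C}_2)]$. Each translate $X+(0,k)$ is a continuum in $\tilde{\C}_1\cap\tilde{\C}_2\subset\tilde{A}$ that meets both ``side boundaries'' of $R$, so a planar Jordan-type argument shows $R\setminus(X+(0,k))$ splits into an upper and a lower unbounded piece. Combined with the uniform bound $K_0$ on the vertical diameter of the components of $\tilde{\C}_1\cap\tilde{\C}_2$, $\tilde{\C}_1\cap\tilde{A}$ and $\tilde{A}\cap\tilde{\C}_2$ (which controls the pieces of the ``side boundary'' of $\tilde{A}$ between consecutive bridges), this lets me conclude that the portion of $\tilde{A}$ lying strictly between $X+(0,k)$ and $X+(0,k+1)$ has uniformly bounded diameter. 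Hence any sub-continuum $Z_n\subset\tilde{A}$ whose vertical span has length $L$ must meet, and therefore pass through, on the order of $L$ distinct translates of $X$.

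The main technical obstacle is upgrading \emph{meets} to \emph{contains}. For this I would argue that each bridge $X+(0,k)$ is not only a component of $\tilde{\C}_1\cap\tilde{\C}_2$ but in fact coincides with the full component of $\tilde{\C}_1\cap\tilde{A}$ and of $\tilde{A}\cap\tilde{\C}_2$ that separates its two adjacent slabs. Once this is established, any sub-continuum of $\tilde{A}$ with points in both the upper and the lower slab must contain that whole component: partial traversal would, via a unicoherence argument in the planar strip $R$, leave a compact piece of $\tilde{A}$ disconnected from the bulk, contradicting that $\tilde{A}$ is itself connected and annular. Granting this containment step, a direct count of the bridges whose heights fall inside $[y_n^-+C,y_n^+-C]$ for a suitable constant $C$ yields $\nu(X,Z_n)\to+\infty$.
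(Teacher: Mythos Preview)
Your plan rests on two claims that do not survive the generality of circloids, and the paper's argument is organised quite differently precisely to avoid them.

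First, the separation picture. You assert that each $X+(0,k)$ ``meets both side boundaries of $R$'' and hence cuts $R$ (and then $\tilde A$) into an upper and a lower piece. But $X$ is merely one connected component of $\tilde{\C}_1\cap\tilde{\C}_2$; there may be infinitely many other components (not integer translates of $X$) accumulating on it, and for circloids as wild as pseudo-circles there is no reason the removal of a single component disconnects the closed region between them. The bound $K_0$ controls the vertical size of each component but says nothing about how they are arranged. Your further claim that $X$ coincides with the full component of $\tilde{\C}_1\cap\tilde A$ (and of $\tilde A\cap\tilde{\C}_2$) through it is also unjustified: the hypothesis $\C_1\not\subset A$ only prevents $\tilde A$ from swallowing all of $\tilde{\C}_1$, not from containing a piece of $\tilde{\C}_1$ much larger than $X$.

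Second, and more seriously, the upgrade from ``meets'' to ``contains'' is not something unicoherence delivers. Unicoherence of the plane tells you that a separating closed set meets a connected set in a connected piece; it does not force a sub-continuum $Z_n\subset\tilde A$ crossing the level of $X$ to contain \emph{all} of $X$. Your sentence ``partial traversal would leave a compact piece of $\tilde A$ disconnected from the bulk'' conflates $\tilde A$ with $Z_n$: connectedness of $\tilde A$ is given, and nothing prevents $Z_n$ from touching $X$ at a single point and continuing upward. Since $\nu(X,Z_n)$ counts translates \emph{contained} in $Z_n$, this is exactly the heart of the matter.

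The paper sidesteps both issues entirely. It argues by contradiction: if, after recentering vertically, some $W_n$ with $\textrm{diam}\to\infty$ still fails to contain $X$, pick $x_n\in X\setminus W_n$; then $x_n$ lies $n$-virtually to one side of $W_n$. Near $x_n$ one builds long continua $L_n\subset\tilde{\C}_1$ inside that same side region, and extracts sub-continua $W_n'\subset W_n$ that are virtually to the opposite side of $L_n$. Passing to Hausdorff limits in $\A$ and applying Lemma~\ref{l.prevprop} forces the limit of the $\pi(W_n')$---an annular continuum inside $A$---to precede $\C_1$, hence $\C_1\subset A$, contradicting \textbf{(S)}. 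No separation of $\tilde A$ by $X$ and no ``meets $\Rightarrow$ contains'' step is ever invoked.
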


Before we proceed with the proof we introduce some useful definitions and results. Given two continua $X$ and $Z$ in $\R^2$ we say that $X$ is
$K$-\emph{centered} with respect to $Z$ if $\textrm{pr}_2(Z)\setminus\textrm{pr}_2(X)$ consists in the union of two disjoint intervals
both having length larger than $K$.

Given a continuum $Z$ in $\R^2$ we say that a continuum $Y$ is $K$-\emph{virtually to the right of} $Z$ if
it is $K$-centered w.r.t. $Z$ and there exists a pair of disjoint vertical half-lines $r,s$ so that:

\begin{itemize}

\item $\textrm{pr}_2(r)$ is bounded bellow and $\textrm{pr}_2(s)$ is bounded above;

\item $r$ meets $Z$ only at it starting point $r_0$ which verifies $\textrm{pr}_2(r_0)=\max \textrm{pr}_2(Z)$;

\item $s$ meets $Z$ only at it starting point $s_0$ which verifies $\textrm{pr}_2(s_0)=\min \textrm{pr}_2(Z)$;

\item $Y$ is contained in the closure of the connected component of $\R^2\setminus s\cup Z\cup r$ whose first projection is unbounded
to the right.

\end{itemize}

Note that for any continuum $Z$ of $\R^2$ it always can be considered such a two half-lines $r,s$, and that $s\cup Z\cup r$ defines a unique connected component $\mathcal{R}$ whose first projection is unbounded o the right and a unique connected component $\mathcal{L}$ whose first projection is unbounded to the left. The analogous definition can be considered for $K$-\emph{virtually to the left}. Before presenting a proof for the proposition we state a lemma.

\begin{lemma}\label{l.prevprop}

Assume we have two sequences of planar continua $(Y_n)_{n\in\N}$ and $(L_n)_{n\in\N}$ so that
$Y_n$ is $a_n$-virtually to the left of $L_n$ with $a_n\to_n\infty$, and $\mathcal{L'}=\lim_H\pi(Y_n)$,
$\mathcal{L}=\lim_H\pi(L_n)$ are annular continua. Then $\mathcal{L'}\preceq\mathcal{L}$.

\end{lemma}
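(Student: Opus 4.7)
I argue by contradiction. Since $\mathcal{L}$ and $\mathcal{L}'$ are connected annular continua in $\A$, their full preimages $\tilde{\mathcal{L}}:=\pi^{-1}(\mathcal{L})$ and $\tilde{\mathcal{L}}':=\pi^{-1}(\mathcal{L}')$ are connected, $\Z$-periodic in the vertical direction, and horizontally bounded in strips $[a',b']\times\R$ and $[a,b]\times\R$ respectively. If $\mathcal{L}'\not\preceq\mathcal{L}$, then $\tilde{\mathcal{L}}\cap\mathcal{U}^-(\tilde{\mathcal{L}}')$ is nonempty; pick $\tilde{q}$ in this set and note that, by $\Z$-invariance of the intersection, every vertical integer translate of $\tilde{q}$ again lies in it.

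For each $n$, using Hausdorff convergence $\pi(L_n)\to\mathcal{L}$ together with the above $\Z$-freedom, I choose an integer shift $v_n$ so that $L_n+v_n$ contains a point $\tilde{q}_n\to\tilde{q}$ placed essentially at the midpoint of the vertical range of $L_n+v_n$. The annularity of $\mathcal{L}'$ forces the vertical extent of $Y_n$ to be eventually $\geq 1-o(1)$; combined with the $a_n$-centered condition and $a_n\to\infty$, the vertical extent of $L_n+v_n$ diverges like $2a_n$, so the vertical range of $Y_n+v_n$ contains $\tilde{q}_y$ for large $n$. Hence $Y_n+v_n$ meets the horizontal line $\{y=\tilde{q}_y\}$, while $\pi(Y_n)\to\mathcal{L}'\subset[a,b]\times\T$ confines $Y_n+v_n$ horizontally to $[a-\epsilon_n,b+\epsilon_n]$.

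Fix a compact box $B_R$ around $\tilde{q}$ with $R>\max(b-a,b'-a')+1$. For $n$ large enough that $a_n>R$, the vertical half-lines $s_n+v_n,r_n+v_n$ lie outside $B_R$, so inside $B_R$ the barrier $\Gamma_n^{v_n}=(s_n+v_n)\cup(L_n+v_n)\cup(r_n+v_n)$ reduces to $(L_n+v_n)\cap B_R$. Extracting a subsequence yields Hausdorff limits $(L_n+v_n)\cap B_R\to L_\infty\subset\tilde{\mathcal{L}}\cap B_R$ with $\tilde{q}\in L_\infty$, and $(Y_n+v_n)\cap B_R\to Y_\infty\subset\tilde{\mathcal{L}}'\cap B_R$. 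The defining inclusion $Y_n+v_n\subset\mathrm{cl}(\mathcal{P}_n)$, where $\mathcal{P}_n$ is the left complementary component of $\R^2\setminus\Gamma_n^{v_n}$, passes to the Kuratowski limit: $Y_\infty\subset\mathrm{cl}(\mathcal{U}^-(\tilde{\mathcal{L}}))\cap B_R$. Repeating the construction with $\tilde{q}$ replaced by an arbitrary $\tilde{p}\in\tilde{\mathcal{L}}'$ gives $\tilde{\mathcal{L}}'\subset\mathrm{cl}(\mathcal{U}^-(\tilde{\mathcal{L}}))$, equivalently $\mathcal{L}'\preceq\mathcal{L}$, contradicting the assumption. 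The hardest step is taking the Kuratowski limit of $\mathcal{P}_n$ inside $B_R$: one must ensure that ``missing pieces'' of $\tilde{\mathcal{L}}\cap B_R$ not captured by $(L_n+v_n)\cap B_R$ (coming from other lifts of $\pi(L_n)$) do not allow the limiting left region to extend beyond $\mathcal{U}^-(\tilde{\mathcal{L}})$; the effectiveness of the vertical half-lines outside $B_R$, guaranteed by $a_n\to\infty$, is essential here to block such escape routes.
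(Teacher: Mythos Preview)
Your argument has a genuine gap, and you in fact flag it yourself. The step where you pass to the Kuratowski limit of the left region $\mathcal{P}_n$ inside $B_R$ and conclude $Y_\infty\subset\textrm{cl}(\mathcal{U}^-(\tilde{\mathcal{L}}))$ is not carried out; the concern you raise about ``missing pieces'' of $\tilde{\mathcal{L}}\cap B_R$ coming from other lifts of $\pi(L_n)$ is real, and nothing in the sketch rules it out. There are also two structural problems. First, the simultaneous requirement that $\tilde{q}_n\to\tilde{q}$ \emph{and} that $\tilde{q}_n$ sit at the vertical midpoint of $L_n+v_n$ cannot in general be arranged: Hausdorff convergence of $\pi(L_n)$ to $\mathcal{L}$ lets you pick a point of some translate near $\tilde{q}$, but gives you no control over where in the vertical range of $L_n$ that point lies. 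Second, the logic is tangled: you begin by assuming $\mathcal{L}'\not\preceq\mathcal{L}$ to produce $\tilde{q}\in\tilde{\mathcal{L}}\cap\mathcal{U}^-(\tilde{\mathcal{L}}')$, but then claim that ``repeating the construction with $\tilde{q}$ replaced by an arbitrary $\tilde{p}\in\tilde{\mathcal{L}}'$'' yields $\mathcal{L}'\preceq\mathcal{L}$. That repetition no longer uses the contradiction hypothesis, and the roles of $\tilde{\mathcal{L}}$ and $\tilde{\mathcal{L}}'$ have been silently swapped.

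The paper's proof sidesteps all of this with a one--line separation idea. Assuming $\mathcal{L}'\not\preceq\mathcal{L}$, take $x_0\in\mathcal{L}'\cap\mathcal{U}^+(\mathcal{L})$ and a curve $\Gamma\subset\mathcal{U}^+(\mathcal{L})$ from $x_0$ to $+\infty$, lifted to $\tilde{\Gamma}$ with bounded $\textrm{pr}_2$. Translate $Y_n$ vertically so that $Y_n'$ meets $B(\tilde{x}_0,\varepsilon_n)$; the matching translate $L_n'$ still has $Y_n'$ $a_n$--virtually to its left. Once $a_n>\textrm{diam}(\textrm{pr}_2(\tilde{\Gamma}))+2\varepsilon_n$, the half--lines $r,s$ miss $\tilde{\Gamma}\cup B(\tilde{x}_0,\varepsilon_n)$, so the barrier $s\cup L_n'\cup r$ can separate $Y_n'$ from $+\infty$ only if $L_n'$ itself meets $\tilde{\Gamma}\cup B(\tilde{x}_0,\varepsilon_n)$. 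But this set is at positive distance from $\tilde{\mathcal{L}}$, contradicting $\pi(L_n)\to_H\mathcal{L}$. The test curve $\Gamma$ replaces your region--limit argument entirely and removes the need to control Kuratowski limits of complementary components.
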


\begin{proof}

Suppose that $\mathcal{L'}\not\preceq\mathcal{L}$ for an absurd. Then we can construct a curve $\Gamma:[0,+\infty)\to\A$
whose image is contained in $\mathcal{U}^+(\mathcal{L})$, starting at a point $x_0\in\mathcal{L}'$ and so that $\Gamma(t)\to_{t\to +\infty} +\infty$. Thus we can take a lift $\tilde{\Gamma}$ of $\Gamma$ starting at a lift
$\tilde{x}_0$ of $x_0$, which is contained in $\mathcal{U}^+(\tilde{\mathcal{L}})$. Moreover, we can assume
that $\textrm{pr}_2(\tilde{\Gamma})$ is bounded.

On the other hand, we can consider vertical integer translations $Y_n'\subset \tilde{\mathcal{L}}'$ of the elements $Y_n$ so that $Y'_n\cap B(\tilde{x}_0,\varepsilon_n)\neq\emptyset$ with
$\varepsilon_n\to_n 0$. We claim that this implies the existence of $n_0\in\N$ such that for all $n\geq n_0$ some integer vertical translation $L_n'$ of $L_n$ must meet $B(\tilde{x}_0,\varepsilon_n)$: for this we pick $n_0$ so that
$a_n$ is larger that $\textrm{diam}(\textrm{pr}_2(\tilde{\Gamma}))+2\varepsilon_n$. Thus by taking $L'_n$ for
all $n\geq n_0$ so that $Y'_n$ is $a_n$-virtually to the left of $L'_n$, as $Y'_n$ is contained in the region
to the left of $r\cup L'_n\cup s$ ($r,s$ half lines of the definition of virtually to the left ) with
$(r\cup s)\cap (\tilde{\Gamma}\cup B(\tilde{x}_0,\varepsilon_n))=\emptyset$, we must have
$$(\tilde{\Gamma}\cup B(\tilde{x}_0,\varepsilon_n))\cap L'_n \neq\emptyset,$$
and we are done with the claim.

Hence, we have that $\tilde{x}_0\in \tilde{\mathcal{L}}$, so $x_0\in\mathcal{L}$, which concludes.

\end{proof}

\begin{proof}[Proof of Proposition \ref{p.inter}]

Assume for a contradiction that we have some $X\in \textrm{c.c.}(\tilde{\C}_1\cap\tilde{\C}_2)$ which does not satisfy the proposition. This implies that we can construct a sequence $(W_n)_{n\in\N}$ given by integer vertical translations of some elements of $(Z_n)_{n\in\N}$
so that:

\begin{itemize}
\item[(i)] $X$ is not contained in $W_n$, for any $n\in\N$;

\item[(ii)] $X$ is $n$-centered w.r.t. $W_n$.

\end{itemize}

We will arrive to a contradiction from this situation. As we are in the situation considered above for $\C_1,\C_2$ and $A$, by taking subsequences, we have either the following situation or the symmetric one:
for every $n\in\N$ there exists a point $x_n\in X\setminus W_n$ so that it is $n$-virtually to the right of $W_n$.

Let us assume this situation, for the complementary one the symmetric argument works. In this context we have the
set $s\cup W_n\cup r$ as in the definition of \emph{virtually to the right}, and its \emph{right} component $\mathcal{R}$ with $x_n\in\mathcal{R}$.
We claim the existence of a sequence of continua $L_n\subset\tilde{C}_1$ verifying:

\begin{enumerate}

\item $L_n\cap B(x_n,\frac{1}{n})\neq\emptyset,$

\item $L_n\subset\mathcal{R}$,

\item $\textrm{diam}(\textrm{pr}_2(L_n))>\frac{n}{2}-1$.

\end{enumerate}

For this, we take a reference line $\Gamma:(-\infty,0]\to \mathcal{U}^-(C_1)$ from $-\infty$ to $B(\pi(x_n),\frac{1}{n})$ and lift
it to a line $\tilde{\Gamma}$ in $\R^2$ with image in $U^-=\pi^{-1}(\mathcal{U}^-(C_1))$. We have that
$\tilde{\Gamma}\cap W_n=\emptyset$ (abusing notation by calling the line and its image with the same name), so
$\textrm{diam}(\textrm{pr}_2(\tilde{\Gamma}\cap\mathcal{R}))>\frac{n}{2}$. Moreover $\tilde{\Gamma}\cap\mathcal{R}$
is in a different connected component of $\mathcal{R}\setminus\tilde{C}_1$ than $U^+=\pi^{-1}(\mathcal{U}^+(C_1))$,
in the space $\mathcal{R}$. This implies that some connected component of $\tilde{C}_1\cap\mathcal{R}$ separates
$\Gamma$ from $U^+$ in $\mathcal{R}$. Such connected component, contains a continuum $L_n$ as claimed.



\smallskip

As the $L_n$ constructed are in $\mathcal{R}$, the right region of $s\cup W_n\cup r$, we have the existence
of a continuum $W'_n\subset W_n$ which is $\frac{n}{6}$ virtually to the left of $L_n$, with
$\textrm{diam}(\textrm{pr}_2(W'_n))\to +\infty$: otherwise, we can construct another line $\Gamma'$
joining $-\infty$ to $L_n$ with $\Gamma'\cap (s\cup W_n\cup r)=\emptyset$, which contradicts $L_n\subset\mathcal{R}$.

\smallskip





Taking subsequences
we can assume that $\lim_H \pi(W_n')=\mathcal{L}'$ and $\lim_H\pi(L_n)=\mathcal{L}=\C_1$ both annular continua.
In this situation, Lemma \ref{l.prevprop} implies that $\mathcal{L}'\preceq \C_1$, which under our hypothesis
implies $\C_1\subset A$, which is imposible.

\end{proof}

We call \emph{bunch} to any region $\textrm{cl}[\mathcal{U}^+(C_1)\cap\mathcal{U}^-(C_2)]$
where $C_1,C_2$ are as considered in the situation \textbf{(S)}. An annular continuum $A$ is strongly contained in
a bunch $\mathcal{B}=\textrm{cl}[\mathcal{U}^+(C_1)\cap\mathcal{U}^-(C_2)]$ if it is as in \textbf{(S)}.

\begin{cor}\label{c.noestira}

Assume $\hat{f}\in\textrm{homeo}_0(\A)$ lifts a toral homeomorphisms in $f\in\textrm{homeo}_0(\T^2)$. Further asume that $\mathcal{B}$ is a bunch, $A$ is an annular continuum strongly contained in $\mathcal{B}$, and $Z\subset\R^2$ is a planar continuum with $\pi(Z)\subset A$ so that $f^n(\pi_{\T^2}(Z))\subset \textrm{p}(A)$ for infinitely many positive integers $n$. Then, $Z$ can not contain two points having different rotation vectors for a planar lift $F$ of $f$.

\end{cor}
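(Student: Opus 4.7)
The plan is to argue by contradiction: two points of distinct rotation vector inside $Z$ will force $\textrm{diam}(F^n(Z)) \to \infty$, and Proposition \ref{p.inter} will then pack too many integer vertical translates of a bounded continuum into the fixed compact set $Z$, contradicting its compactness.

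Suppose $x, y \in Z$ have distinct rotation vectors under $F$. Then $\|F^n(x) - F^n(y)\|$ grows linearly in $n$, so $\textrm{diam}(F^n(Z)) \to \infty$. Next, from $f^n(\pi_{\T^2}(Z)) \subset \textrm{p}(A)$ combined with the fact that $\textrm{p}|_A$ is a homeomorphism, the translates $A + (k,0)$ are pairwise disjoint in $\A$, and the connectedness of $F^n(Z)$ forces $F^n(Z) \subset \tilde{A} + (k_n, 0)$ for some integer $k_n$, where $\tilde{A} := \pi^{-1}(A)$. Setting $\tilde{Z}_n := F^n(Z) - (k_n, 0)$, I obtain a sub-continuum of $\tilde{A}$ with diameter tending to infinity.

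Now I fix any $X \in \textrm{c.c.}(\tilde{\C}_1 \cap \tilde{\C}_2)$ and apply Proposition \ref{p.inter} along the subsequence of valid $n$ to conclude $\nu(X, \tilde{Z}_n) \to \infty$. For each such $n$, I choose a reference point $p \in X$ and integers $v_1 < v_2 < \cdots < v_{\nu_n}$ with $X + (0, v_i) \subset \tilde{Z}_n$, equivalently $p + (k_n, v_i) \in F^n(Z)$. Because $F$ commutes with integer translations (as a lift of a toral homeomorphism), the points $q_i := F^{-n}(p + (k_n, v_i)) = F^{-n}(p + (k_n, 0)) + (0, v_i)$ all lie in $Z$. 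Projecting vertically yields $\textrm{diam}(\textrm{pr}_2(Z)) \geq v_{\nu_n} - v_1 \geq \nu_n - 1$, which contradicts the compactness of $Z$ once $n$ is taken large.

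The only delicate step is showing that $F^n(Z)$ sits in a single horizontal translate of $\tilde{A}$: this rests on the injectivity of $\textrm{p}|_A$ (making $\textrm{p}^{-1}(\textrm{p}(A))$ a disjoint union of horizontal translates) together with the connectedness of $F^n(Z)$. Once this localization is in place, Proposition \ref{p.inter} does the heavy lifting, and the final pigeonhole-style conclusion is immediate.
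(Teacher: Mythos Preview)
Your proof is correct and follows essentially the same route as the paper's: assume two distinct rotation vectors, invoke Proposition~\ref{p.inter} to force $\nu(X,F^n(Z))\to\infty$, and then use the $\Z^2$-equivariance of $F$ to pull back the many vertical translates of $X$ into the compact set $Z$. You are in fact more careful than the paper on two points it leaves implicit---the localization of $F^n(Z)$ inside a single horizontal translate $\tilde{A}+(k_n,0)$ (needed to apply Proposition~\ref{p.inter}) and the explicit count giving $\textrm{diam}(\textrm{pr}_2(Z))\geq \nu_n-1$---so nothing is missing.
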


\begin{proof}

Fix a non-empty connected component $X$ of $\tilde{C}_1\cap\tilde{C}_2$, where  $\textrm{cl}[\mathcal{U}^+(C_1)\cap\mathcal{U}^-(C_2)]$ and $\tilde{C}_1,\tilde{C}_2$ lifts $\mathcal{C}_1,\mathcal{C}_2$. Assume for a contradiction that $Z$ contains points having different rotation vectors.

\smallskip

Then $\textrm{pr}_2(F^n(Z))\to +\infty$, so we have by Proposition \ref{p.inter} that 
the number of integer copies of $X$ contained in $F^n(Z)$ must be unbounded in $n$. 
This is imposible for the lift $F$ of a toral homeomorphism $f$ and a planar continuum $Z$.
 
\end{proof}

In view of this corollary, we now want the following result.

\begin{prop}\label{p.tipocompgen}

Assume an annular continuum $A\subset\A$ is strongly contained in a bunch generated by the cricloids $\mathcal{C}_1$
and $\mathcal{C}_2$. Then, if $z_1,z_2\in A$ are any two points there exists a continuum $Z\subset\tilde{A}$
so that $\pi^{-1}(z_1)\cap Z\neq\emptyset$ and $\pi^{-1}(z_2)\cap Z\neq\emptyset$.

\end{prop}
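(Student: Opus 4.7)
The plan is to identify a connected component $K$ of $\tilde{A}=\pi^{-1}(A)$ that projects surjectively onto $A$, and then construct the desired continuum $Z$ inside $K$ via a chain-lifting argument combined with a Hausdorff-limit extraction.

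For the surjectivity step, I would fix a connected component $K$ of the closed set $\tilde A$ and show $\pi(K)=A$. The stabiliser of $K$ in the vertical-translation deck group $\Z$ is some $d\Z$. Using that $\tilde A\subset\tilde{\mathcal{B}}$, the horizontally bounded strip between $\tilde{\C}_1$ and $\tilde{\C}_2$, the fundamental domain $K\cap(\R\times[0,d])$ is closed and bounded, hence compact, so $\pi(K)=\pi\bigl(K\cap(\R\times[0,d])\bigr)$ is closed in $A$ as a continuous image of a compact set. Combined with openness of $\pi(K)$ in $A$ (which follows from $\pi$ being a local homeomorphism and a standard covering-space argument for the sheet through any $\tilde y\in K$), the connectedness of $A$ forces $\pi(K)=A$. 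Therefore one can fix lifts $\tilde z_1\in K\cap\pi^{-1}(z_1)$ and a target $\tilde z_2^{*}\in K\cap\pi^{-1}(z_2)$.

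For the construction of $Z$, for each $n\ge 1$ I would pick a $\tfrac{1}{n}$-chain $z_1=p_0^n,p_1^n,\ldots,p_{k_n}^n=z_2$ in the compact connected space $A$ and lift it inductively starting at $\tilde z_1$, at each step taking the lift of $p_{i+1}^n$ nearest to $\tilde p_i^n$; the lifted chain stays in $K$, and its endpoint is a lift of $z_2$ in $K$, differing from $\tilde z_2^{*}$ by a vertical integer translate in $d\Z$. To match the endpoint to $\tilde z_2^{*}$ I would append the appropriate number of copies of a $\tfrac{1}{n}$-chain loop in $A$ of winding $d$; such a loop is obtained as the $\pi$-projection of a $\tfrac{1}{n}$-chain in $K$ from $\tilde z_1$ to $\tilde z_1+(0,d)$, which exists because $K$ is connected and $d\Z$-invariant. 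The compact connected tube $T_n:=\bigcup_i\overline{B}(\tilde p_i^n,1/n)$ contains $\tilde z_1$ and $\tilde z_2^{*}$ and lies in the $1/n$-neighbourhood of the closed set $K$; passing to a Hausdorff-convergent subsequence via Blaschke selection yields a compact connected $Z\subset K\subset\tilde A$ still containing both points.

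The hardest parts will be the openness of $\pi(K)$ in $A$, and, in parallel, the construction of the winding-$d$ essential chain loops at every scale. Both rely on the strong containment hypothesis: it endows $A$ with an annular structure sandwiched between $\C_1$ and $\C_2$ rigid enough for the local covering-space arguments to apply, and for essential loops in $A$ of prescribed winding to be realisable uniformly at all scales via the $d\Z$-invariance of $K$.
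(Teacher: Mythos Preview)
Your approach is genuinely different from the paper's, and it has two real gaps. First, the ``openness of $\pi(K)$ in $A$'' step is not a standard covering-space fact: that argument needs the base to be locally connected, and an annular continuum $A$ need not be. For a point $\tilde y\in K$ with evenly covered neighbourhood $V_0$, you only know that the \emph{component} of $V_0\cap\tilde A$ through $\tilde y$ lies in $K$, not all of $V_0\cap\tilde A$; so $\pi(K)$ need not be open in $A$. You flag this as the hard part and gesture at strong containment, but you never say how the bunch structure would restore local connectedness or otherwise force openness. Second, even granting $\pi(K)=A$, your Hausdorff-limit step is not under control: the lifted $\tfrac1n$-chains can wind an unbounded number of times (for wild $A$ there is no a priori bound on the winding of $\tfrac1n$-chains as $n\to\infty$), so after your loop-correction the tubes $T_n$ need not stay in a fixed compact set, and Blaschke selection does not apply. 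Relatedly, a connected (non-compact) set can contain two points that lie in no compact connected subset, so knowing $K$ is connected is not enough on its own.

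The paper's proof is much shorter and bypasses both issues by using Proposition~\ref{p.inter} directly. One picks any lifts $z_1',z_2'\in\tilde A$ and, since each lies in an unbounded component of $\tilde A$, builds continua $Z^1_n\ni z_1'$ and $Z^2_n\ni z_2'$ in $\tilde A$ with $\mathrm{diam}(\mathrm{pr}_2(Z^i_n))\to\infty$. Proposition~\ref{p.inter} then forces some vertical integer translate $X+v_1$ of a fixed component $X$ of $\tilde\C_1\cap\tilde\C_2$ to lie inside $Z^1_{n_0}$, and some $X+v_2$ inside $Z^2_{n_0}$; since $X\subset\tilde A$, the set $Z=Z^1_{n_0}\cup\bigl(Z^2_{n_0}+(v_1-v_2)\bigr)$ is a continuum in $\tilde A$ meeting $\pi^{-1}(z_1)$ and $\pi^{-1}(z_2)$. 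The strong-containment hypothesis is used precisely through Proposition~\ref{p.inter}, not through any covering-space regularity of $A$.
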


\begin{proof}

Fix two lifts $z'_1,z'_2\in\tilde{A}$ of $z_1,z_2$ respectively. It is easy to see that we can construct two sequences of continua $(Z^1_n)_{n\in\N}$ and $(Z^2_n)_{n\in\N}$ so that

\begin{itemize}

\item $z'_1\in Z^1_n$ and $z'_2\in Z^2_n$ for all $n\in\N$;

\item $\textrm{diam}(\textrm{pr}_2)(Z^i_n)\to\infty$ for $i=1,2$.

\end{itemize}

If $X$ is any connected component of $\C_1\cap \C_2$ we have due to Proposition \ref{p.inter} that for
some positive integer $n_0$ both numbers $\nu(X,Z^1_n)$ and $\nu(X,Z^2_n)$ are non-zero. As $X$ must be contained
in $A$ we are done.

\end{proof}

\section{Proof of Theorem \ref{T.semiconj}}

In light of the result \cite{nosotros} which forbids the existence of an extension of an irrational
rotation with a rotation set as in the statement of the Theorem \ref{T.semiconj}, in order to conclude
is enough to prove the following intermediate result:

\begin{thm}\label{t.inter}

Assume that for a lift $F$ of $f\in\textrm{Homeo}_0(\T^2)$ we have $\rho(F)=\{\alpha\}\times[\rho^-,\rho^+]$
where $\rho^-<\rho^+$, $\alpha\in\Q^c$, and that $f$ has the horizontal bounded deviation property. Then, some
finite cover of $f$ is an extension of an irrational rotation.

\end{thm}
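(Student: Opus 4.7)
The plan is to lift $f$ to $\hat{f}\in\textrm{Homeo}_0(\A)$ via the vertical $\Z$-quotient, use horizontal bounded deviation to build an $\hat{f}$-invariant family of essential circloids $\{\C_\theta\}_{\theta\in S^1_0}$ in $\A$ permuted by the irrational rotation $R_\alpha$, and then collapse each circloid (together with any enclosed bunch) to a single point. This yields a semi-conjugacy from $\hat{f}$ to $R_\alpha:S^1\to S^1$, which after passing to a finite horizontal cover of $\T^2$ descends to the required extension structure on some finite cover of $f$.

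To construct the circloid family I would follow the J\"ager--Tal framework of \cite{jagertal}: horizontal bounded deviation forces each orbit to stay in a horizontal strip of bounded width shifted by $n\alpha$, so Hausdorff limits of orbit-strips furnish essential annular continua, and minimal essential sub-continua among them yield circloids $\C_\theta$ indexed by a closed $R_\alpha$-invariant set $S^1_0\subset S^1$. By minimality of $R_\alpha$ either $S^1_0=S^1$ (so the family fills $\A$) or $S^1_0$ is a Cantor set. The key new ingredient is that the second possibility can be ruled out using Section 2: each complementary gap in $S^1_0$ would produce a bunch $\mathcal{B}\subset\A$ containing an annular continuum $A$ strongly contained in $\mathcal{B}$. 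Corollary \ref{c.noestira} then forbids $A$ from supporting more than one rotation vector, whereas Proposition \ref{p.tipocompgen} combined with the hypothesis $\rho^-<\rho^+$ lets us exhibit two points in $A$ whose orbits realize distinct vertical rotation vectors, giving a contradiction. Hence $S^1_0=S^1$ and the quotient map $\pi:\A\to S^1$ sending $x\in\C_\theta$ to $\theta$ is a continuous semi-conjugacy from $\hat{f}$ to $R_\alpha$.

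Finally, the horizontal $\Z$-deck transformation of $\A\to\T^2$ permutes the uniquely determined family $\{\C_\theta\}$, inducing a rotation of $S^1$ commuting with $R_\alpha$; the uniform horizontal width of the family forces this induced rotation to have finite order $k$, so taking the $k$-fold horizontal cover of $\T^2$ the factor map $\pi$ descends to a semi-conjugacy intertwining the lifted map with $R_\alpha$, which is exactly the extension structure asserted by the theorem. The main obstacle I expect is the contradiction inside a hypothetical bunch: ruling out a Cantor gap requires producing inside an annular continuum $A\subset\mathcal{B}$ two concrete orbits with distinct vertical rotations, which means combining the bunch size estimate $K_0$ from setting \textbf{(S)}, Birkhoff-type orbits realizing the endpoints $\rho^\pm$, and Proposition \ref{p.tipocompgen} so that the two orbits stay connected by a lifted sub-continuum inside $\tilde{A}$. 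Making this construction robust against the bounded-deviation assumption, and checking that the induced rotation on $S^1$ really is rational (so that a finite cover suffices), is the technical heart of the argument.
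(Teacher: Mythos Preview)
Your overall plan---build the J\"ager--Tal family of circloids and then use the results of Section~2 to rule out the case where two of them meet---is exactly the paper's strategy, but your packaging of the dichotomy is off and this creates a real gap. You index the circloids by a closed $R_\alpha$-invariant subset $S^1_0\subset S^1$ and propose to rule out the case where $S^1_0$ is a Cantor set. But $R_\alpha$ is minimal when $\alpha$ is irrational, so any nonempty closed invariant $S^1_0$ is already all of $S^1$: there is no Cantor alternative to rule out. The genuine obstruction is not that the index set might be thin; it is that the circloids $\C_r$ (which in the construction of \cite{jager-linear,jagertal} exist for \emph{every} $r\in\R$) might \emph{intersect} one another, so that your collapsing map is ill defined. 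The paper handles this by quoting the criterion from \cite{jager-linear,jagertal,JP}: if a single circloid $\textrm{p}(\C_{r_0})$ has pairwise disjoint $f$-iterates, then $f$ is already an extension of $R_\alpha$. This reduces the whole theorem to showing that some $\C_r$ is free, and the finite cover is absorbed into the J\"ager--Tal setup at the outset---so your closing argument about the horizontal deck transformation inducing a finite-order circle rotation is unnecessary (and, as you state it, not obviously correct).

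The contradiction in the non-free case is also organised more concretely in the paper than in your sketch. Assuming $\C_r\cap\C_s\neq\emptyset$ for some $r<s$, one gets the bunch $\mathcal{B}$; properties~(3) and~(5) of the family give intermediate $r<r'<s'<s$ and a sub-bunch $\mathcal{B}'$ that is \emph{strongly} contained in $\mathcal{B}$, whose finitely many $f$-iterates cover $\T^2$, and which returns to $\mathcal{B}$ along a syndetic set of times. The covering property is precisely the step you are vague about: it is what guarantees---without any separate ``Birkhoff-type'' construction---that $\mathcal{B}'$ already contains points realising both extreme rotation vectors $(\alpha,\rho^\pm)$. Proposition~\ref{p.tipocompgen} then produces a continuum $Z\subset\tilde{\mathcal{B}}'$ joining lifts of two such points, and the syndetic-return property feeds Corollary~\ref{c.noestira} to give the contradiction. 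So your use of Section~2 is the right idea, but the inputs you need are ``iterates of $\mathcal{B}'$ cover $\T^2$'' and ``$\mathcal{B}'$ returns syndetically into $\mathcal{B}$'', both of which come directly from properties (2)--(3) of the circloid family rather than from any gap structure in a putative Cantor index set.
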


Thus, by the mentioned result \cite{nosotros}, this can not exists. Our goal now is to prove this last result.

\smallskip

We start by summarizing the constructions in \cite{jager-linear,jagertal}. Fix $f\in\textrm{Homeo}_0(\T^2)$ so that for some lift $F$
we have $\rho(F)=\{\alpha\}\times[\rho^-,\rho^+]$ with $\rho^-<\rho^+$, $\alpha\in\Q^c$, and that $f$ has the horizontal bounded deviation property. In the mentioned article the authors find a family of circloids $\{\mathcal{C}_r\}_{r\in\R}$
of $\A$ having the following properties related to a finite cover of $f$, which we keep calling $f$ (and $\hat{f}:\A\to\A$ to its lift):

\begin{enumerate}

\item $\mathcal{C}_r\preceq\mathcal{C}_s$ whenever $r\leq s$

\item $\mathcal{C}_r\subset B(\pi(\{r\}\times\R),\kappa)$ for some uniform constant $\kappa$;

\item $\hat{f}(\C_r)=\C_{r+\alpha}$;

\item $\textrm{p}(\C_r)$ is a circloid in $\T^2$ for all $r\in\R$.

\item $f^n(\textrm{p}(\C_r))\neq \textrm{p}(\C_r)$ for every $r\in\R$ and every positive integer $n$.

\end{enumerate}

The key result in \cite{jager-linear,jagertal} (see also \cite{JP}) which allows the construction of a semiconjugacy between $f$ and an irrational rotation of angle $\alpha$ is the following.

\begin{thm}

Assume that for some $r_0\in\R$ we have that $f^n(\textrm{p}(\C_{r_0}))\cap f^m(\textrm{p}(\C_{r_0}))=\emptyset$
whenever $n\neq m$, then $f$ is an extension of an irrational rotation.

\end{thm}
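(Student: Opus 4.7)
The plan is to construct an $f$-equivariant continuous surjection $h\colon\T^2\to\T$ intertwining $f$ with the rotation $R_\alpha$, thereby exhibiting $f$ as an extension of $R_\alpha$. The candidate is obtained by lifting to the annulus and defining
$$\hat h\colon\A\to\R,\qquad \hat h(x)=\inf\bigl\{r\in\R : x\in \textrm{cl}[\mathcal{U}^-(\mathcal{C}_r)]\bigr\}.$$
By properties (1) and (2) the map $\hat h$ is well-defined, monotone with respect to $\preceq$, upper semi-continuous, and takes every real value; by the inherited $(1,0)$-equivariance of the family (a consequence of (4)) one has $\hat h(x+(1,0))=\hat h(x)+1$, so $\hat h$ descends to $h\colon\T^2\to\T$; and by (3) one obtains $\hat h\circ \hat f=\hat h+\alpha$, which on the torus becomes $h\circ f=R_\alpha\circ h$. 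Surjectivity of $h$ is immediate from $\hat h(\A)=\R$.

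The whole content of the argument lies in showing that $h$ is \emph{continuous}. The irrationality of $\alpha$ ensures that the set $R=\{r_0+n\alpha\bmod 1\}_{n\in\Z}$ is dense in $\T$; the hypothesis then produces a dense family of pairwise disjoint torus circloids $\{\textrm{p}(\mathcal{C}_r)\}_{r\in R}$. Suppose $\hat h$ failed to be continuous at some $x_0\in\A$. Exploiting monotonicity and upper semi-continuity of $\hat h$, this failure forces the existence of an open parameter interval $I\subset\R$ of length less than one such that $x_0\in\mathcal{C}_r$ for every $r\in I$. Choosing two distinct parameters $r_1,r_2\in R$ that lie in $I\bmod 1$ (possible by density, and with $r_1\not\equiv r_2\bmod 1$ automatic since $\alpha$ is irrational), one deduces that, up to a horizontal integer translate, $\textrm{p}(x_0)$ is a common point of $\textrm{p}(\mathcal{C}_{r_1})=f^{n_1}(\textrm{p}(\mathcal{C}_{r_0}))$ and $\textrm{p}(\mathcal{C}_{r_2})=f^{n_2}(\textrm{p}(\mathcal{C}_{r_0}))$, contradicting the hypothesis.

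The main obstacle is the step from a discontinuity of $\hat h$ at $x_0$ to the conclusion that an entire interval's worth of circloids pass through $x_0$. Because $\mathcal{C}_r$ is a minimal annular continuum, and because the two one-sided ``jump candidates'' at $x_0$ coming from distinct approximating subsequences could a priori behave asymmetrically, one needs the tools assembled in Section~2 -- bunches, strongly contained annular continua, and the homotopical intersection count of Proposition~\ref{p.inter} -- to convert the topological asymmetry at $x_0$ into actual set-theoretic incidence of points with circloids at nearby parameters. Once this is accomplished, the density argument is immediate, and the resulting continuous semiconjugacy $h$ exhibits $f$ as an extension of $R_\alpha$, as required.
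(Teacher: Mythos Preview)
The paper does not prove this statement: it is quoted as the key input from \cite{jager-linear,jagertal} (see also \cite{JP}), and the paper's own work in Section~3 is devoted to the complementary case where \emph{no} circloid $\textrm{p}(\C_{r_0})$ is free. Your overall strategy---building a semiconjugacy $\hat h$ from the ordered family $\{\C_r\}$---is precisely what those references do, so in outline you are reproducing the cited argument.

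Two corrections. First, with your definition $\hat h(x)=\inf\{r:x\in\textrm{cl}[\mathcal U^-(\C_r)]\}$ the sublevel sets $\{\hat h\le t\}=\bigcap_{\varepsilon>0}\textrm{cl}[\mathcal U^-(\C_{t+\varepsilon})]$ are closed, so $\hat h$ is \emph{lower} semicontinuous, not upper. Second, and more importantly, the step you flag as ``the main obstacle'' is in fact elementary and does not require any of the Section~2 machinery. If $\hat h$ fails to be upper semicontinuous at $x_0$, there exist $\delta>0$ and $x_n\to x_0$ with $\hat h(x_n)>\hat h(x_0)+\delta=:t_0+\delta$; hence $x_n\in\mathcal U^+(\C_r)$ for every $r\in(t_0,t_0+\delta]$, and passing to the limit gives $x_0\in\textrm{cl}[\mathcal U^+(\C_r)]$. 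On the other hand $\hat h(x_0)=t_0$ forces $x_0\in\textrm{cl}[\mathcal U^-(\C_r)]$ for all such $r$. Since $\textrm{cl}[\mathcal U^+(\C_r)]\cap\textrm{cl}[\mathcal U^-(\C_r)]\subset\C_r$ for a circloid, you get $x_0\in\C_r$ for every $r\in(t_0,t_0+\delta]$, and your density argument finishes. Proposition~\ref{p.inter} and the notion of bunch play no role here; in the paper they are used for the opposite purpose, namely to derive a contradiction when the circloids \emph{do} intersect.

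The argument in the cited references is slightly cleaner: one defines $\hat h$ using only the dense parameter set $r_0+\alpha\Z+\Z$, for which the hypothesis gives pairwise \emph{disjoint} circloids. Disjointness yields the strict inclusion $\textrm{cl}[\mathcal U^-(\C_{r_1})]\subset\mathcal U^-(\C_{r_2})$ for $r_1<r_2$ in this set, from which both $\{\hat h<t\}$ and $\{\hat h>t\}$ are visibly open, so continuity is immediate with no contradiction argument needed.
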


Thus, in order to prove Theorem \ref{T.semiconj}, it is enough to see that for some $r\in\R$ the circloid
$\pi(\C_r)$ is free. We assume from now on that for some $r\in\R$ the cricloid $\C_r$ is not free, and construct an absurd throughout this section.

As $\C_r$ is not free, we have that $\C_r\cap\C_s\neq\emptyset$ for some $s\in\R$. We assume $s>r$ (for the symmetric case the same proof works). Thus, due to properties 1 and 5, we have a bunch $\mathcal{B}=\textrm{cl}[\mathcal{U}^+(\C_r)\cap\mathcal{U}^-(\C_s)]$.

Furthermore, due to property 3 we have for some $n_1$ and some $n_2$ that $f^{n_i}(\textrm{p}(\C_r))$ is strongly contained in $\textrm{p}(\mathcal{B})$ for $i=1,2$. This implies that we have for some $r'<s'$ the following
$$\C_r\preceq \C_{r'}\preceq \C_{s'}\preceq \C_s.$$


Define the bunch $\mathcal{B}'$ associated to $r',s'$
Thus, due to property 5, $\mathcal{B}'$ is strongly contained in the bunch $\mathcal{B}$.
Moreover, again due to property 3 and property 2, we have:


\begin{itemize}

\item[(i)] $\textrm{p}(\mathcal{B}'),\dots, \textrm{p}(\hat{f}^{j_0}(\mathcal{B}'))$ covers $\T^2$, for some $j_0\in\N$;

\item[(ii)] $f^n(\textrm{p}(\mathcal{B}'))$ is strongly contained in $\textrm{p}(\mathcal{B})$ for every $n$ contained
in a syndetic set $\mathcal{I}\subset\N$.

\end{itemize}

Property (i) implies that we can find in any lift $\tilde{\mathcal{B}}'$ two points $b^-,b^+$ having rotation vectors
$(\alpha,\rho^+)$ and $(\alpha,\rho^-)$ respectively. Furthermore, as $\mathcal{B}''$ si strongly contained in $\mathcal{B}$, Proposition \ref{p.tipocompgen} allows us to find a continuum $Z\subset\tilde{\mathcal{B}}''$
containing points in the equivalence class of $b^-$ and of $b^+$. But this situation together with point (ii) implies a contradiction of Corollary \ref{c.noestira}.

Therefore, we obtain the desired absurd, which proves \ref{t.inter} and so \ref{T.semiconj}.

\bibliographystyle{koro}
\bibliography{bib}

\end{document}